\newtheorem{thm}[subsection]{Theorem}
\newtheorem{pro}[subsection]{Proposition}
\newtheorem{cor}[subsection]{Corollary}
\newtheorem{defn}[subsection]{Definition}
\numberwithin{equation}{section} \setcounter{tocdepth}{1}
\newcommand{\bea}{\begin{eqnarray}}
\newcommand{\eea}{\end{eqnarray}}
\def \> {\Rightarrow}
\def \0 {\emptyset}
\begin{document}
\sloppy

\title[On Lebesgue Nonlinear Transformations ]{On Lebesgue Nonlinear Transformations}

\author{Nasir Ganikhodjaev}
\address{Nasir Ganikhodjaev \\
 Department of Computational \& Theoretical Sciences\\
Faculty of Science, International Islamic University Malaysia\\
P.O. Box 25200, Kuantan\\
Pahang, Malaysia} \email{{\tt nasirgani@hotmail.com}}
\author{Mansoor Saburov}
\address{Mansoor Saburov \\
Department of Computational \& Theoretical Sciences\\
Faculty of Science, International Islamic University Malaysia\\
P.O. Box 25200, Kuantan\\
Pahang, Malaysia} \email{{\tt msaburovm@gmail.com}}
\author{Ramazon Muhitdinov}
\address{Ramazon Muhitdinov \\
Bukhara State University, Uzbekistan.} \email{{\tt muxitdinov-ramazon@rambler.ru}}

\begin{abstract}
In this paper, we introduce a quadratic stochastic operators on the set of all probability measures of a measurable space.  We study the dynamics of the
Lebesgue quadratic stochastic operator on the set of all Lebesgue measures of the set [0,1]. Namely, we prove the regularity of the Lebesgue quadratic
stochastic operators.

\vskip 0.3cm \noindent {\it
Mathematics Subject Classification}: 47HXX, 46TXX.\\
{\it Key words}: Quadratic operator, measurable space, Lebesgue nonlinear transformation.
\end{abstract}

\date{\today}
\maketitle

\vskip 0.3 truecm

\section{Introduction}

Quadratic stochastic operator (in short QSO) was first introduced in Bernstein's work \cite{Br}. The QSO was considered an important source of analysis for
 the study of dynamical properties and modeling in various fields such as biology \cite{K,Lyu}, physics \cite{U}, game theory \cite{NGRGUJ}, control system \cite{SMSKh2014a,SMSKh2014b}. Such operator frequently arises in many models of mathematical genetics \cite{NGMSUJ,GNSMNA}. A fixed point set and an omega limiting set of quadratic stochastic operators defined on the finite dimensional simplex were deeply studied in \cite{GR1989,GE} and \cite{MFSM,MFSMQI}. Ergodicity and chaotic dynamics of QSO on the finite dimensional simplex were studied in the papers \cite{NGRGUJ}, \cite{GNSMNA,GZ},
 \cite{MSabu2007,MSabu2013}, and \cite{Z}. In \cite{GRMFRU}, it was given a long self-contained exposition of recent achievements and open problems in
 the theory of quadratic stochastic operators. The analytic theory of stochastic processes generated by quadratic operators was established in \cite{G}.
 In the paper \cite{NGNZH}, the nonlinear Poisson quadratic stochastic operators over the countable state space was studied. In this paper, we shall study
 the  dynamics of Lebesgue quadratic stochastic operators on the set of all Lebesgue measures of the set [0,1]. Let us first recall some notions and
 notations (see \cite{G},\cite{NGNZH}).

Let $(X,\mathbb{F})$ be a measurable space and  $S(X,\mathbb{F})$ be the set of all  probability  measures on $(X,\mathbb{F}),$
where $X$ is a state space and $\mathbb{F}$ is $\sigma$-algebra of subsets of $X.$
It is known that the set  $S(X,\mathbb{F})$ is a compact, convex space and a form of Dirac measure $\delta_x$ which defined by
\begin{displaymath}
\delta_x(A)=\left\{
\begin{array}{rl}
1 & \mbox{ if } x\in A \\
0 & \mbox{ if } x\notin A
\end{array}\right.
\end{displaymath}
for any $A\in\mathbb{B}$  is extremal element of $S(X,\mathbb{B}).$

Let $\{P(x,y,A): x,y\in X, A\in\mathbb{F}\}$  be a family of functions on $X\times X\times \mathbb{F}$
that satisfy the following conditions: \\
i) $P(x,y,\cdot)\in S(X,\mathbb{F}),$  for any  fixed $x,y\in X,$ that is, $P(x,y,\cdot): \mathbb{F}\rightarrow [0,1]$ is the probability measure on
 $\mathbb{F};$  \\
ii) $P(x,y,A)$ regarded as a function of two variables $x$ and $y$ with fixed $A\in\mathbb{F}$ is measurable function on $(X\times X,
\mathbb{F}\otimes\mathbb{F});$ \\
iii) $P(x,y,A)=P(y,x, A)$ for any $x,y\in X, A\in\mathbb{F}.$ \\
We consider a nonlinear transformation (quadratic stochastic operator)
$V: S(X,\mathbb{F})\rightarrow S(X,\mathbb{F})$   defined  by
\begin{equation}\label{qso}
(V\lambda)(A)=\int_X\int_X P(x,y,A)d\lambda(x)d\lambda(x),
\end{equation}
where $\lambda\in S(X,\mathbb{F})$ is an arbitrary
initial probability measure and $A\in \mathbb{F} $ is an arbitrary measurable set.
\begin{defn}
 A probability measure $\mu$ on $(X,\mathbb{F})$ is said to be discrete, if there exists a countable set of elements $\{x_1,x_2,\cdots \}\subset X,$ such that $\mu(\{x_i\})=p_i$ for $i=1,2,\cdots,$ with $\sum_i p_i=1.$ Then $\mu(X\setminus \{x_1,x_2,\cdots\})=0$ and for any $ A\in \mathbb{F}, \mu(A)=\sum_{x_i\in A}\mu(\{x_i\}).$
 \end{defn}
    A family $\{P(x,y,A): x,y\in X, A\in\mathbb{F}\}$ on arbitrary state space $X$, such that for any $x,y\in X$ a measure $P(x,y,\cdot)$ is a discrete measure, is shown in the following example.
   {\bf Example 1 } Let $(X,\mathbb{F})$ be a measurable space. For any $x,y\in X$ and $A\in \mathbb{F},$ assume
   \begin{displaymath}
P(x,y,A)=\left\{
\begin{array}{rl}
0 & \mbox{ if } x\notin A \mbox{ and } y\notin A, \\
\frac{1}{2}  & \mbox{ if } x\in A, y\notin A \mbox{ or } x\notin A, y\in A, \\
1 & \mbox{ if } x\in A \mbox{ and } y\in A.
\end{array}\right.
\end{displaymath}
Assume $\{V^n\lambda: n=0,1,2,\cdots \}$ is the trajectory of the initial point $\lambda\in S(X,\mathbb{F}),$
where $V^{n+1}\lambda=V(V^n\lambda)$ for all $n=0,1,2,\cdots,$ with $ V^0\lambda=\lambda.$
 \begin{defn}
 A quadratic stochastic operator $V$  is called a regular (weak regular), if for any initial measure $\lambda \in S(X,\mathbb{F}),$    the strong limit (respectively weak limit)  $\lim_{n\rightarrow \infty} V^n(\lambda)=\mu$ exists.
 \end{defn}
It is easy to verify that the quadratic stochastic operator $V$ generated by this family is identity transformation, that is for any measure $\lambda\in  S(X,\mathbb{F})$ we have $V\lambda=\lambda.$
In fact, for any $A\in \mathbb{F},$
\begin{eqnarray*}
V\lambda(A)&=&\int_X\int_XP(x,y,A)d\lambda(x)d\lambda(y)= \int_A\int_A 1\cdot d\lambda(x)d\lambda(y) \\
&+&\int_A\int_{A^c}\frac{1}{2}\cdot d\lambda(x)d\lambda(y)+\int_{A^c}\int_A\frac{1}{2}\cdot d\lambda(x)d\lambda(y)+
\int_{A^c}\int_{A^c} 0\cdot d\lambda(x)d\lambda(y) \\
&=& \lambda^2(A)+\frac{1}{2}\lambda(A)(1-\lambda(A))+\frac{1}{2}(1-\lambda(A))\lambda(A)=\lambda(A),
\end{eqnarray*}
where $A^c=X\setminus A.$ \\
If a state space $X=\{1,2,\cdots,m\}$ is a finite set and corresponding $\sigma$-algebra is a power set $\mathcal{P}(X),$ i.e., the set of all subsets of $X,$ then the set of all probability measures on $(X,\mathcal{F})$ has the following form:
 \begin{equation} \label{simplex}
S^{m-1} =\{\textbf{x} = (x_1, x_2,\cdots,x_m)\in R^m :  x_i \geq 0 \mbox{ for any } i, \mbox { and } \sum_{i=1}^m x_i=1\}
\end{equation}
that is called a $(m-1)$-dimensional simplex. \\
In this case for any $i,j\in X$ a probabilistic measure $P(i,j,\cdot)$ is a discrete measure with $\sum_{k=1}^m P(ij,\{k\})=1,$ where $P(ij,\{k\})\equiv P_{ij,k}$ and corresponding qso V has the following form
 \begin{equation} \label{gqso}
(V \textbf{x})_k = \sum_{i,j=1}^m P_{ij,k}x_ix_j
\end{equation}
 for any $\textbf{x} \in S^{m-1}$  and for all $ k = 1,\cdots,m,$  where
 $$a) P_{ij,k}\geq 0, \ \
b) P_{ij,k} = P_{ji,k} \mbox { for all }  i, j, k; \ \   c) \sum_{k=1}^m P_{ij,k}=1.$$
 Such operator can be reinterpreted in terms of evolutionary operator of free population and in this form it has a fair history. Note that  the theory of qso with finite state space is well developed and on the whole all well-known papers devoted to  such qso  \cite{Br}-\cite{NGRGUJ}, \cite{NGMSUJ}-\cite{Z}.
In \cite{NGNZH} the authors studied qso with infinite countable state space. \\
 In this paper we construct the family of  quadratic stochastic operators defined on the continual state space $X=[0,1)$ and investigate their trajectory behavior.
\begin{defn}
 A transformation $V$  is called a Lebesgue qso, if $X=[0,1)$ and $\mathbb{F}$ is a Borel $\sigma$-algebra $\mathbb{B}$ on $[0,1).$
 \end{defn}
In the next section we present a family of Lebesgue qso.
\section{A construction of Lebesgue qso}
Let $X=[0,1)$ and $\mathbb{B}$ is a Borel $\sigma$-algebra on $[0,1).$ For any element $(x,y)\in X\times X,$ we define a discrete probability $P(x,y,\cdot)$ as follows:\\
\begin{eqnarray}
&(i)& \mbox{ for } \ \ x<y \ \ \mbox{ assume } \ \ P(x,y,\{x\})=p \ \ \mbox{ and } \ \ P(x,y,\{y\})=p, \\
&(ii)& \mbox{ for } \ \  x=y \ \ \mbox{ assume } \ \  P(x,x,\{x\})=1, \\
&(iii)& \mbox{ for } \ \ x>y \ \ \mbox{ assume } \ \ P(y,x,\cdot)=P(x,y,\cdot)
\end{eqnarray}
where $p+q=1,$ with $p\geq 0$ and $q\geq 0.$ \\
Let $V$  is a quadratic stochastic operator
\begin{equation}\label{qso}
(V\lambda)(A)=\int_X\int_X P(x,y,A)d\lambda(x)d\lambda(x),
\end{equation}
generated by family of functions (2.1)-(2.3), where $\lambda\in S(X,\mathbb{F})$ is an arbitrary
initial probability measure and $A\in \mathbb{F} $ is an arbitrary measurable set.
 This operator is a natural generalization of Volterra qso. Note that if $p=q=0.5,$ then the corresponding operator is the identity operator. \\
 We show that for any initial measure $\lambda\in S(X,\mathbb{F}),$ there exists strong limit of the sequence  $\{V^n\lambda: n=0,1,2,\cdots \}.$
\section{A Limit behaviour of the trajectories}
 In this section we study the limit behaviour of the trajectory $\{V^n\lambda: n=0,1,2,\cdots \}$ for any initial measure $\lambda\in S(X,\mathbb{F}).$
 \subsection{A discrete initial measure $\lambda$}
 It is easy to verify that for any $a\in [0,1)$ an extremal Dirac measure $\delta_a$ is a fixed point of the operator $V.$ Since $\delta_a(\{a\})=1,$ then from (2.2) we have
 \begin{equation}\label{qso}
(V\delta_a)(\{a\})=\int_X\int_X P(x,y,\{a\})d\delta_a(x)d\delta_a(y)=P(a,a,\{a\})=1=\delta_a(\{a\}),
\end{equation}
that is the Dirac measure $\delta_a$ for any $a\in[0,1)$ is a fixed point. \\
Let a measure $\lambda$  is a convex linear combination of  two Dirac measures $\delta_a$ and $\delta_b$ , i.e., $\lambda=\alpha\delta_a +(1-\alpha)\delta_b,$ where $\alpha\in[0,1]$ and $a,b\in [0,1]$ with  $a<b.$ Simple algebra gives
\begin{equation}\label{qso}
(V\lambda)(\{a\})=\int_X\int_X P(x,y,\{a\})d\lambda(x)d\lambda(y)=\lambda(a)[\lambda(a)+2p\lambda(b)],
\end{equation}
and
\begin{equation}\label{qso}
(V\lambda)(\{b\})=\int_X\int_X P(x,y,\{b\})d\lambda(x)d\lambda(y)=\lambda(b)[\lambda(b)+2q\lambda(a)],
\end{equation}
i.e., $V\lambda$ is the convex linear of the same two Dirac measures $\delta_a$ and $\delta_b$ with  $$V\lambda=\alpha_1\delta_a +(1-\alpha_1)\delta_b,$$
where $\alpha_1=\alpha[\alpha+2p(1-\alpha)]$ and $1-\alpha_1=(1-\alpha)[1-\alpha+2q\alpha].$ Then it is evident that $V^2\lambda$ is the convex linear of the same two Dirac measures $\delta_a$ and $\delta_b$ with  $$V^2\lambda=\alpha_2\delta_a +(1-\alpha_2)\delta_b,$$
where $\alpha_2=\alpha_1[\alpha_1+2p(1-\alpha_1)]$ and $1-\alpha_2=(1-\alpha_1)[1-\alpha_1+2q\alpha_1].$ Thus one can show that $V^n\lambda$ is the convex linear of the same two Dirac measures $\delta_a$ and $\delta_b$ with  $$V^n\lambda=\alpha_n\delta_a +(1-\alpha_n)\delta_b,$$
where $\alpha_n=\alpha_{n-1}[\alpha_{n-1}+2p(1-\alpha_{n-1})]$ and $1-\alpha_n=(1-\alpha_{n-1})[1-\alpha_{n-1}+2q\alpha_{n-1}].$
Simple calculus gives
\begin{displaymath}
\lim_{n\rightarrow \infty}\alpha_n=\left\{
\begin{array}{rl}
1 & \mbox{ if } p>\frac{1}{2} \\
0 & \mbox{ if } p<\frac{1}{2}
\end{array}\right.
\end{displaymath}
that is
\begin{displaymath}
\lim_{n\rightarrow \infty}V^n\lambda=\left\{
\begin{array}{rl}
\delta_a & \mbox{ if } p>\frac{1}{2} \\
\delta_b & \mbox{ if } p<\frac{1}{2}
\end{array}\right.
\end{displaymath}
Let now a measure $\lambda$  is a convex linear combination of  three Dirac measures $\delta_a,\delta_b$ and $\delta_c,$  i.e., $\lambda=\alpha\delta_a +\beta\delta_b+(1-\alpha-\beta)\delta_c,$ where $\alpha,\beta\in[0,1]$ and $a,b,c\in [0,1]$ with $\alpha+\beta\leq1$ and $a<b<c.$ Simple algebra gives
\begin{equation}\label{qso}
(V\lambda)(\{a\})=\int_X\int_X P(x,y,\{a\})d\lambda(x)d\lambda(y)=\lambda(a)[\lambda(a)+2p(\lambda(b)+\lambda(c)],
\end{equation}
\begin{equation}\label{qso}
(V\lambda)(\{b\})=\int_X\int_X P(x,y,\{a\})d\lambda(x)d\lambda(y)=\lambda(b)[\lambda(b)+2q\lambda(a)+2p\lambda(c)],
\end{equation}
and
\begin{equation}\label{qso}
(V\lambda)(\{c\})=\int_X\int_X P(x,y,\{a\})d\lambda(x)d\lambda(y)=\lambda(c)[\lambda(c)+2q(\lambda(a)+\lambda(b)],
\end{equation}
i.e., $V\lambda$ is the convex linear of the same three Dirac measures $\delta_a,\delta_b$ and $\delta_c$ with  $$V\lambda=\alpha_1\delta_a +\beta_1\delta_b+(1-\alpha_1-\beta_1)\delta_c,$$
where $\alpha_1=\alpha[\alpha+2p(1-\alpha)],\beta_1=\beta[\beta+2q\alpha+2p(1-\alpha-\beta)]$ and $1-\alpha_1-\beta_1=(1-\alpha-\beta)[1-\alpha-\beta+2q(1-\alpha-\beta].$ Then it is evident that $V^2\lambda$ is the convex linear of the same three Dirac measures $\delta_a,\delta_b$ and $\delta_c,$  with $V^2\lambda=\alpha_2\delta_a +\beta_2\delta_b+(1-\alpha_2-\beta_2)\delta_c,$
where $\alpha_2=\alpha_1[\alpha_1+2p(1-\alpha_1)],\beta_2=\beta_1[\beta_1+2q\alpha_1+2p(1-\alpha_1-\beta_1)]$ and $1-\alpha_2-\beta_2=(1-\alpha_1-\beta_1)[1-\alpha_1-\beta_1+2q(1-\alpha_1-\beta_1].$
Thus one can show that $V^n\lambda$ is the convex linear of the same three Dirac measures $\delta_a,\delta_b$ and $\delta_c$ with  $$V^n\lambda=\alpha_n\delta_a +\beta_n\delta_b+(1-\alpha_n-\beta_n)\delta_c,$$
where $\alpha_n=\alpha_{n-1}[\alpha_{n-1}+2p(1-\alpha_{n-1})],\beta_n=\beta_{n-1}[\beta_{n-1}+2q\alpha_{n-1}+2p(1-\alpha_{n-1}-\beta_{n-1})]$ and $1-\alpha_n-\beta_n=(1-\alpha_{n-1}-\beta_{n-1})[1-\alpha_{n-1}-\beta_{n-1}+2q(1-\alpha_{n-1}-\beta_{n-1}].$
As noted above if $p>\frac{1}{2}$ then $\lim_{n\rightarrow \infty}\alpha_n=1$ and if $p<\frac{1}{2}$ then $\lim_{n\rightarrow \infty}(1-\alpha_n-\beta_n)=1$
that is
\begin{displaymath}
\lim_{n\rightarrow \infty}V^n\lambda=\left\{
\begin{array}{rl}
\delta_a & \mbox{ if } p>\frac{1}{2} \\
\delta_c & \mbox{ if } p<\frac{1}{2}
\end{array}\right.
\end{displaymath}

Let a measure $\lambda$  is a convex linear combination of  $n$  Dirac measures $\{\delta_{a_i}, i=1,\cdots,n\}$  i.e., $\lambda=\sum_{i=1}^n\alpha_i\delta_{a_i},$ where $\alpha_i\in[0,1], i=1,\cdots,n$ and $a_i\in [0,1], i=1,\cdots,n$ with $\sum_{i=1}^n\alpha=1$ and $a_1<a_2<\cdots<a_n.$ Simple algebra gives
\begin{equation}\label{qso}
(V\lambda)(\{a_1\})=\int_X\int_X P(x,y,\{a_1\})d\lambda(x)d\lambda(y)=\lambda(a_1)[\lambda(a_1)+2p(1-\lambda(a_1))],
\end{equation}
and
\begin{equation}\label{qso}
(V\lambda)(\{a_n\})=\int_X\int_X P(x,y,\{a_n\})d\lambda(x)d\lambda(y)=\lambda(a_n)[\lambda(a_n)+2q(1-\lambda(a_n))],
\end{equation}
As shown above we have
\begin{displaymath}
\lim_{n\rightarrow \infty}V^n\lambda=\left\{
\begin{array}{rl}
\delta_{a_1} & \mbox{ if } p>\frac{1}{2} \\
\delta_{a_n} & \mbox{ if } p<\frac{1}{2}
\end{array}\right.
\end{displaymath}
\subsection{A continuous initial measure $\lambda$} Let $\lambda\in S(X,\mathbb{B})$ be a continuous probability measure and $A=[a,b]\in \mathbb{B}$ be a segment
in $X$ with $A^c=[0,a)\cup (b,1).$ Then,
\begin{eqnarray*}
V\lambda(A)&=&\int_a^b\int_a^b 1\cdot d\lambda(x)d\lambda(y)+ \int_a^b\int_0^a p\cdot d\lambda(x)d\lambda(y)
+\int_a^b\int_b^1q\cdot d\lambda(x)d\lambda(y)\\
&+&\int_0^a\int_a^b p\cdot d\lambda(x)d\lambda(y)+\int_b^1\int_a^b q\cdot d\lambda(x)d\lambda(y)+ \int_0^a\int_0^a 0\cdot d\lambda(x)d\lambda(y)\\
&+& \int_0^a\int_b^10\cdot d\lambda(x)d\lambda(y)+\int_b^1\int_0^a 0\cdot d\lambda(x)d\lambda(y)+\int_b^1\int_b^1 0\cdot d\lambda(x)d\lambda(y)\\
&=&\lambda([a,b])\left[\lambda([a,b])+2p\lambda([0,a))+2q\lambda((b,1))\right].
\end{eqnarray*}
It is evident that the measure $V\lambda$ is absolutely continuous with respect to measure $\lambda.$ Then according Radon-Nikodym Theorem, there exists non-negative measurable function $f_\lambda^{(1)}:X\rightarrow R$ called a density, such that
\begin{equation}
V\lambda(A)=\int_Af_\lambda^{(1)}(x)d\lambda(x).
\end{equation}
The derivations of the density functions are presented as follows. For rather small segment $[x,x+\Delta x]$ we have
\begin{equation}
V\lambda([x,x+\Delta x])=\lambda([x,x+\Delta x])[\lambda([x,x+\Delta x])+2p\lambda([0,x))+2q\lambda((x+\Delta x,1))]
\end{equation}
and
\begin{eqnarray*}
f_\lambda^{(1)}(x)&=& \lim_{\Delta x\rightarrow 0} \frac{V\lambda([x,x+\Delta x])}{\lambda([x,x+\Delta x])}\\
&=&\lim_{\Delta x\rightarrow 0}[\lambda([x,x+\Delta x])+2p\lambda([0,x))+2q\lambda((x+\Delta x,1))]\\
&=&2p\lambda([0,x))+2q\lambda((x,1)).
\end{eqnarray*}
Now consider a measure $V^2\lambda=V(V\lambda).$ It is evident that
\begin{equation}
V^2\lambda(A)=\int_Af_{V\lambda}^{(1)}(x)dV\lambda(x).
\end{equation}
and since $V^2\lambda$ is absolutely continuous with respect to measure $\lambda,$ we have
\begin{equation}
V^2\lambda(A)=\int_Af_\lambda^{(2)}(x)d\lambda(x).
\end{equation}
According (2.6) we have
\begin{eqnarray*}
\begin{split}
&V^2\lambda([x,x+\Delta x]) \\
&=V\lambda([x,x+\Delta x])[V\lambda([x,x+\Delta x])+2pV\lambda([0,x))+2qV\lambda((x+\Delta x,1))]\\
&=\lambda([x,x+\Delta x])[\lambda([x,x+\Delta x])+2p\lambda([0,x))+2q\lambda((x+\Delta x,1))]\\
&\quad \quad \quad \cdot\{\lambda([x,x+\Delta x])[\lambda([x,x+\Delta x])+2p\lambda([0,x))+2q\lambda((x+\Delta x,1))]\\
&\quad \quad \quad \quad +2p\lambda([0,x])[\lambda([0,x])+2q\lambda((x,1))]+\\
& \quad \quad \quad \quad +2q\lambda([x+\Delta x,1))[\lambda([x+\Delta x,1))+2p\lambda([0,x+\Delta x))\}
\end{split}
\end{eqnarray*}
Then
\begin{eqnarray*}
f_\lambda^{(2)}(x)&=& [2p\lambda([0,x))+2q\lambda([x,1))]\\
&&\cdot\{2p\lambda([0,x])[\lambda([0,x))+2q\lambda([x,1))]+2q\lambda([x,1))[\lambda([x,1))+2p\lambda([0,x])\}.
\end{eqnarray*}
Similarly, one can show that a measure $V^n\lambda$ is absolutely continuous with respect to $\lambda$ for any $n$ and
\begin{equation}\label{Vn}
V^n\lambda(A)=\int_Af_\lambda^{(n)}(x)d\lambda(x).
\end{equation}
Let $g_\lambda(x)=\lambda([0,x))$ and $g_\lambda^{(n)}(x)=g_\lambda^{(n-1)}(x)(g_\lambda^{(n-1)}(x)+2q(1-g_\lambda^{(n-1)}(x)),$ for $n=1,2,3,\cdots $, where
$g_\lambda^{(0)}(x)=x$ and $g_\lambda^{(1)}(x)=g_\lambda(x).$ It is evident that
 $$1-g_\lambda^{(n)}(x)=(1-g_\lambda^{(n-1)}(x))(1-g_\lambda^{(n-1)}(x)+2pg_\lambda^{(n-1)}(x)).$$
Then, since $\lambda([x,1))=1-\lambda([0,x)),$ we have
\begin{equation}
f(x)=2px+2q(1-x),
\end{equation}
\begin{equation}
f_\lambda^{(1)}(x)=f(g_\lambda^{(1)}(x)),
\end{equation}
and
\begin{equation}
f_\lambda^{(2)}(x)=f(g_\lambda^{(1)}(x))\cdot f(g_\lambda^{(2)}(x)).
\end{equation}
Using induction, one can prove that for any $n$ we have
\begin{equation}\label{fn}
f_\lambda^{(n)}(x)=\prod_{i=1}^nf(g_\lambda^{(i)}(x)).
\end{equation}
It is easy to see that $f_\lambda^{(n)}(0)=(2q)^n$ and $f_\lambda^{(n)}(1)=(2p)^n.$
Since
$$\int_0^1f_\lambda^{(n)}(x)d\lambda(x)=1,$$
one has that
\begin{equation}
f_\lambda^{(n)}(0)\rightarrow 0 \ \ \mbox{and } \ \ f_\lambda^{(n)}(1)\rightarrow \infty \ \ \mbox{ if } p>1/2,
\end{equation}
and
\begin{equation}
f_\lambda^{(n)}(0)\rightarrow \infty \ \ \mbox{ and } \ \  f_\lambda^{(n)}(1)\rightarrow 0 \ \ \mbox{ if }
 p<1/2.
\end{equation}
If $\lambda=m$ is a usual Lebesgue measure on $[0,1],$ then $m([0,x))=x$ and $g_m^{(1)}(x)=x.$ In this case one can explicitly find the functions $f_m^{(n)}(x)$ for any $n.$ \\

\section{Regularity of Lebesgue qso}

Now, we are aiming to study the limit behavior of the Radon-Nikodym derivatives $f_\lambda^{(n)}(\cdot)$ for $n\rightarrow \infty.$ Let $G(x)=x(x+2q(1-x))$ for $p,q\geq 0$ and $p+q=1.$ We always assume that $p,q\neq \frac{1}{2}$. One can easily check that $G'(x)=f(x)$ and $G''(x)=f'(x)=2(p-q).$ Since $f(x)\geq 0$ for any $x\in[0,1],$ the function $G:[0,1]\to[0,1]$ is increasing. Moreover, the function $f:[0,1]\to\mathbb{R}_{+}$ is increasing whenever $p>q$ (or equivalently $p>\frac{1}{2}$) and decreasing whenever $p<q$ (or equivalently $p<\frac{1}{2}$).

It is easy to check that $g^{(n)}_\lambda(x)=G\left(g^{(n-1)}_\lambda(x)\right)$. We know that $g_\lambda:[0,1]\to[0,1],$ $g_\lambda(x)=\lambda([0,x))$ is the increasing function. We suppose that the function $g_\lambda:[0,1]\to[0,1]$ is also differentiable. It is clear that
$$
(g_\lambda^{(n)}(x))'=G'\left(g^{(n-1)}_\lambda(x)\right)\cdot (g^{(n-1)}_\lambda(x))'
$$
Consequently, $g^{(n)}_\lambda:[0,1]\to[0,1],$ for all $n\in\mathbb{N},$ are differentiable and increasing functions.

\begin{pro}
Let $f_\lambda^{(n)}:[0,1]\to\mathbb{R}_{+},$ $n\in\mathbb{N}$ be functions given by \eqref{fn}. Then, the function $f_\lambda^{(n)}$ is increasing whenever $p>\frac{1}{2}$ and decreasing whenever $p<\frac{1}{2}$.
\end{pro}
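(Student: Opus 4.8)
The plan is to read off the monotonicity of $f_\lambda^{(n)}$ directly from the product representation \eqref{fn}, namely $f_\lambda^{(n)}(x)=\prod_{i=1}^{n}f\bigl(g_\lambda^{(i)}(x)\bigr)$, by analyzing each factor separately and then recombining. The two ingredients already established are that each iterate $g_\lambda^{(i)}:[0,1]\to[0,1]$ is increasing, and that the affine map $f(t)=2pt+2q(1-t)=2q+2(p-q)t$ is increasing when $p>\tfrac12$ and decreasing when $p<\tfrac12$. So the strategy is: (i) determine the monotonicity of each factor $f\circ g_\lambda^{(i)}$ via the composition rule; (ii) check that every factor is nonnegative; and (iii) invoke the elementary fact that a finite product of nonnegative functions that are all monotone in the same direction is again monotone in that direction.

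For step (i): since $g_\lambda^{(i)}$ is increasing, the composite $f\circ g_\lambda^{(i)}$ inherits the monotonicity of $f$, i.e.\ it is increasing when $p>\tfrac12$ and decreasing when $p<\tfrac12$ (a decreasing outer map composed with an increasing inner map reverses order). For step (ii): for every $x$ one has $g_\lambda^{(i)}(x)\in[0,1]$, whence $f\bigl(g_\lambda^{(i)}(x)\bigr)=2q+2(p-q)g_\lambda^{(i)}(x)\ge 0$, being an affine function of the argument $g_\lambda^{(i)}(x)\in[0,1]$ taking the nonnegative values $2q$ and $2p$ at the endpoints. For step (iii) I would record, by a one-line induction on the number of factors, that if $h_1,\dots,h_n\ge0$ are all increasing then $\prod_i h_i$ is increasing, and symmetrically for decreasing; nonnegativity is exactly what makes the chain $h_1(x)h_2(x)\le h_1(y)h_2(x)\le h_1(y)h_2(y)$ valid for $x\le y$. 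Applying this to the $n$ factors $f\circ g_\lambda^{(i)}$ yields the claim.

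Alternatively, using the assumed differentiability of $g_\lambda$, the same conclusion follows from the Leibniz rule:
\[
\bigl(f_\lambda^{(n)}\bigr)'(x)=\sum_{i=1}^{n}\Bigl(\prod_{j\neq i}f\bigl(g_\lambda^{(j)}(x)\bigr)\Bigr)\,f'\bigl(g_\lambda^{(i)}(x)\bigr)\,\bigl(g_\lambda^{(i)}(x)\bigr)'.
\]
Here $f'\equiv 2(p-q)$ is constant, each derivative $\bigl(g_\lambda^{(i)}\bigr)'\ge0$ because $g_\lambda^{(i)}$ is increasing, and each partial product is $\ge0$ by step (ii); hence every summand carries the sign of $p-q$, so the derivative is $\ge0$ for $p>\tfrac12$ and $\le0$ for $p<\tfrac12$.

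I regard the argument as essentially routine; the only point demanding care — and the reason the statement is not purely formal — is the nonnegativity verification in step (ii), without which neither the product-of-monotone-functions principle nor the sign analysis of the Leibniz sum would be valid.
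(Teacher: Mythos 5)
Your proposal is correct, and in fact it contains the paper's own proof verbatim as your ``alternative'' argument: the paper differentiates the product via the Leibniz rule, observes that each partial product $\prod_{i\neq k}f(g_\lambda^{(i)}(x))$ and each $(g_\lambda^{(k)}(x))'$ is nonnegative, and concludes that every summand carries the sign of $f'=2(p-q)$. Your primary argument --- that a finite product of nonnegative functions which are all monotone in the same direction is monotone in that direction, applied to the factors $f\circ g_\lambda^{(i)}$ --- is a genuinely different and slightly stronger route: it dispenses entirely with the differentiability of $g_\lambda$, which the paper must assume as a standing hypothesis in order to write down $(f_\lambda^{(n)})'$ at all, and it makes explicit the nonnegativity check $f(t)=2q+2(p-q)t\in[2q,2p]\subset[0,\infty)$ for $t\in[0,1]$, which the paper uses but does not spell out. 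What the paper's derivative computation buys in exchange is a formula for $(f_\lambda^{(n)})'$ that is at least notationally aligned with the rest of Section 4; what your elementary argument buys is validity for an arbitrary continuous (not necessarily differentiable) initial measure $\lambda$. Both arguments are complete and correct.
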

\begin{proof}
We know that
$$
(f_\lambda^{(n)}(x))'=\sum\limits_{k=1}^{n}\left(\prod\limits_{i=1,\ i\neq k}^{n}f(g^{i})_\lambda(x)\right)\cdot f'(g^{(k)}_\lambda(x))\cdot \left(g^{(k)}_\lambda(x)\right)'.
$$
Since $\prod\limits_{i=1,\ i\neq k}^{n}f(g^{i})_\lambda(x)\geq 0$ and  $\left(g^{(k)}_\lambda(x)\right)'\geq 0,$ the function  $f_\lambda^{(n)}$ is increasing whenever $p>\frac{1}{2}$ and decreasing whenever $p<\frac{1}{2}.$ This completes the proof.
\end{proof}

 Let $p>\frac{1}{2}$ and $\beta_n=\frac{1-\frac{1}{n}}{\sqrt[n-1]{16p^4}}$. It is clear that $0<\beta_n<1$ and $\lim\limits_{n\to\infty}\beta_n=1$. So, since $2q<1,$ one has that $\beta_n>2q$ for large $n$. Moreover,  we have that $2p\sqrt{\beta_n^{n-1}}<\frac{1}{2p}<1$.

 Let $B_n=\frac{\beta_n-2q}{1-2q}.$ One has that $0<B_n<1$ and $\lim\limits_{n\to\infty}B_n=1.$

 \begin{pro}
Let $p>\frac{1}{2}$ and $\beta_n, B_n$ be given as above. The following statements hold true:
 \begin{itemize}
 \item[$(i)$] One has that $G(x)\leq \beta_nx$ for any $x\in[0,B_n];$
 \item[$(ii)$] One has that $G[0,B_n]=[0,\beta_nB_n]\subset[0,B_n].$
 \end{itemize}
 \end{pro}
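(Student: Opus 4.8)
The plan is to reduce both parts to the single observation that $B_n$ is, by construction, exactly the point where the inequality in $(i)$ turns into an equality. First I would write $G$ out explicitly: since $p+q=1$ we have $G(x)=x\bigl(x+2q(1-x)\bigr)=(2p-1)x^2+2qx$, and the hypothesis $p>\tfrac12$ makes the leading coefficient $2p-1=1-2q$ strictly positive.

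For $(i)$, the case $x=0$ is trivial, both sides vanishing. For $x\in(0,B_n]$ I would divide the target inequality $G(x)\le\beta_n x$ by $x$, reducing it to the affine inequality $(2p-1)x+2q\le\beta_n$. The left-hand side is increasing in $x$ because its slope $2p-1$ is positive, so it is enough to verify it at the right endpoint $x=B_n$. Substituting $B_n=\dfrac{\beta_n-2q}{1-2q}=\dfrac{\beta_n-2q}{2p-1}$ gives $(2p-1)B_n+2q=\beta_n$ exactly, so equality holds at $x=B_n$ and hence the inequality holds throughout $(0,B_n]$; this establishes $(i)$.

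For $(ii)$ I would use the fact, recorded just before the proposition, that $G\colon[0,1]\to[0,1]$ is increasing. Being continuous and monotone, $G$ carries the interval $[0,B_n]$ onto $\bigl[G(0),G(B_n)\bigr]=\bigl[0,G(B_n)\bigr]$. The endpoint computation already performed in $(i)$ shows $G(B_n)=\beta_n B_n$, so $G[0,B_n]=[0,\beta_n B_n]$. Finally, since $0<\beta_n<1$ (given above), we have $\beta_n B_n<B_n$, whence $[0,\beta_n B_n]\subset[0,B_n]$, which is the required invariance.

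I do not expect any genuine obstacle: the whole content is the algebraic identity $(2p-1)B_n+2q=\beta_n$, which says that the parabola $G$ stays below the line $y=\beta_n x$ on $[0,B_n]$ with equality at both endpoints $x=0$ and $x=B_n$, after which $(ii)$ follows purely from monotonicity of $G$ together with $\beta_n<1$. Note that the auxiliary estimates on $\beta_n$ listed before the statement (such as $2p\sqrt{\beta_n^{n-1}}<\tfrac{1}{2p}$) are not needed for this proposition; they are presumably reserved for the subsequent iteration of $G$ and the analysis of the limit of $f_\lambda^{(n)}$.
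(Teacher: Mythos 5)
Your proof is correct and follows essentially the same route as the paper: the inequality $G(x)\le\beta_n x$ reduces to the affine bound $x+2q(1-x)=(2p-1)x+2q\le\beta_n$, which holds precisely for $x\le B_n$ by the definition of $B_n$, and part $(ii)$ then follows from the monotonicity of $G$ together with $\beta_n<1$. Your observation that the other estimates on $\beta_n$ are reserved for the later iteration argument is also accurate.
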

 \begin{proof}
 Since $0\leq x\leq B_n=\frac{\beta_n-2q}{1-2q},$ we have that $x+2q(1-x)\leq \beta_n$ or equivalently $G(x)\leq \beta_nx$. On the other hand, since $G(x)$ is the increasing function, we get that $0=G(0)\leq G(x)\leq G(B_n)=\beta_nB_n<B_n.$
 \end{proof}
\begin{cor}\label{gi}
Let $p>\frac{1}{2}$. One has that $0\leq g^{(i)}_\lambda(x)\leq \beta_n^{i-1}g_\lambda(x), \ \forall \ x\in[0,B_n], i=\overline{2,n}$.
\end{cor}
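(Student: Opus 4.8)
The plan is to argue by induction on $i$, using part $(ii)$ of the preceding Proposition to confine the entire orbit $\{g^{(i)}_\lambda(x)\}_{i}$ to the forward-invariant interval $[0,B_n]$, and part $(i)$ to extract one factor of $\beta_n$ at each iteration. Concretely, I would prove by induction on $i=1,2,\dots,n$ the joint statement that, for every $x\in[0,B_n]$, $(a)$ $g^{(i)}_\lambda(x)\in[0,B_n]$ and $(b)$ $g^{(i)}_\lambda(x)\leq \beta_n^{\,i-1}g_\lambda(x)$; the corollary is then the restriction of $(b)$ to $i=\overline{2,n}$, while the lower bound $0\leq g^{(i)}_\lambda(x)$ is immediate from $G\geq 0$.

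First I would record the base case $i=1$. Here $g^{(1)}_\lambda(x)=g_\lambda(x)$, so $(b)$ is the trivial equality $g_\lambda(x)\leq \beta_n^{0}g_\lambda(x)$, whereas $(a)$ is the assertion $g_\lambda(x)\leq B_n$ for $x\in[0,B_n]$. This membership is the point on which everything hinges, and I expect it to be the main obstacle: for a completely arbitrary increasing $g_\lambda$ there is no reason for $g_\lambda(x)\leq B_n$ to hold, and in fact unwinding the $i=2$ case shows the inequality $g^{(2)}_\lambda(x)=G(g_\lambda(x))\leq \beta_n g_\lambda(x)$ is equivalent, after dividing by $g_\lambda(x)$ and using $1-2q>0$, to precisely $g_\lambda(x)\leq \frac{\beta_n-2q}{1-2q}=B_n$. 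One must therefore use that $g_\lambda(x)\leq x$ on $[0,B_n]$ (which holds with equality for the Lebesgue measure, where $g_m(x)=x$, the case of primary interest), so that $g_\lambda(x)\leq x\leq B_n$ closes the base case.

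For the inductive step, assume $(a)$ and $(b)$ at some level $i$ with $1\leq i<n$. Since $g^{(i)}_\lambda(x)\in[0,B_n]$ by $(a)$, part $(ii)$ of the Proposition gives $g^{(i+1)}_\lambda(x)=G(g^{(i)}_\lambda(x))\in G[0,B_n]=[0,\beta_nB_n]\subset[0,B_n]$, which is $(a)$ at level $i+1$. For $(b)$, I would apply part $(i)$ at the point $g^{(i)}_\lambda(x)\in[0,B_n]$ to get $g^{(i+1)}_\lambda(x)=G(g^{(i)}_\lambda(x))\leq \beta_n\, g^{(i)}_\lambda(x)$, and then substitute the inductive bound to obtain $g^{(i+1)}_\lambda(x)\leq \beta_n\cdot\beta_n^{\,i-1}g_\lambda(x)=\beta_n^{\,i}g_\lambda(x)$, which is $(b)$ at level $i+1$. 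This completes the induction.

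The only delicate point, as stressed above, is keeping the orbit inside $[0,B_n]$: once the base membership $g_\lambda(x)\leq B_n$ on $[0,B_n]$ is secured, the forward-invariance of $[0,B_n]$ under $G$ from part $(ii)$ propagates it automatically to every iterate, and the contraction factor $\beta_n$ from part $(i)$ accumulates multiplicatively to yield $\beta_n^{\,i-1}$. I would accordingly spend essentially all the care on justifying $g_\lambda(x)\leq B_n$ and treat the remaining induction as a routine two-line verification.
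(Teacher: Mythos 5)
Your induction is exactly the argument the paper intends: the corollary is stated with no proof at all, as an immediate iteration of the preceding Proposition, and your inductive step --- part $(ii)$ to keep the orbit inside $[0,B_n]$, part $(i)$ to harvest one factor of $\beta_n$ per iteration --- is correct and is the only reasonable way to fill it in. You have also put your finger on the one genuinely delicate point, the base case. Since for $p>\frac{1}{2}$ one has $1-2q=p-q>0$ and $G(y)/y=2q+(1-2q)y$, the inequality $G(y)\leq\beta_n y$ holds \emph{precisely} when $y\leq B_n$; hence the $i=2$ instance of the corollary, $G(g_\lambda(x))\leq\beta_n g_\lambda(x)$, is equivalent to $g_\lambda(x)\leq B_n$, and for an arbitrary continuous probability measure $\lambda$ there is no reason for $\lambda([0,x))\leq B_n$ to hold on all of $[0,B_n]$ --- a measure placing mass greater than $B_n$ near $0$ violates it, and then the claimed bound fails already at $i=2$. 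So the corollary as stated needs an additional hypothesis; your patch $g_\lambda(x)\leq x$ (i.e.\ $\lambda([0,x))\leq x$) is sufficient and holds for the Lebesgue measure, but it is an assumption the paper never imposes, and the paper goes on to use the corollary, via Theorem \ref{p>0.5}, for arbitrary continuous $\lambda$. In short: the gap you identified is real, it lies in the source rather than in your argument, and your proof becomes complete once the extra hypothesis $g_\lambda([0,B_n])\subset[0,B_n]$ (or the restriction to $\lambda=m$) is made explicit.
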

\begin{thm}\label{p>0.5}
Let $p>\frac{1}{2}.$ One has that $0\leq f_\lambda^{(n)}(x)\leq (\frac{1}{2p})^n$ for any $x\in[0,B_n].$
\end{thm}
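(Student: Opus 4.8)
The plan is to start from the product representation \eqref{fn}, $f_\lambda^{(n)}(x)=\prod_{i=1}^n f(g_\lambda^{(i)}(x))$, and to control each factor by feeding the geometric decay of the iterates $g_\lambda^{(i)}$ supplied by Corollary \ref{gi} into the monotonicity of $f$. Fix $x\in[0,B_n]$. Two facts are already in hand: since $p>\tfrac12$ the map $f$ is increasing on $[0,1]$, and by Corollary \ref{gi} one has $0\le g_\lambda^{(i)}(x)\le\beta_n^{\,i-1}g_\lambda(x)$ for $i=\overline{2,n}$. Monotonicity of $f$ turns these into $f(g_\lambda^{(i)}(x))\le f(\beta_n^{\,i-1}g_\lambda(x))$, while $f\ge0$ gives the trivial lower bound $f_\lambda^{(n)}(x)\ge0$. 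Because the resulting product is increasing in $g_\lambda(x)$, and because $f_\lambda^{(n)}$ is itself increasing by the Proposition on its monotonicity, it suffices to establish the estimate at the right endpoint, i.e. to bound $\prod_{i=1}^n f(\beta_n^{\,i-1}B_n)$ (here I use $g_\lambda(x)\le B_n$ on the interval, which in the Lebesgue normalization $g_\lambda(x)=x$ is immediate).

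At the endpoint the factors take a usable closed form. Writing $f(t)=2q+2(p-q)t$ and using the two defining relations $p-q=1-2q$ and $2(p-q)B_n=2(\beta_n-2q)$ (the latter is exactly how $B_n$ was chosen), each factor becomes $f(\beta_n^{\,i-1}B_n)=2q+2(\beta_n-2q)\beta_n^{\,i-1}$. The exponent that should emerge is dictated by the elementary identity $\prod_{i=1}^n\beta_n^{\,i-1}=\beta_n^{\,n(n-1)/2}=\big(\sqrt{\beta_n^{\,n-1}}\big)^n$, which is precisely the quantity appearing in the calibration $2p\sqrt{\beta_n^{\,n-1}}<\tfrac1{2p}$. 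Thus the entire theorem is reduced to the single numerical inequality $\prod_{i=1}^n f(\beta_n^{\,i-1}B_n)\le\big(\tfrac1{2p}\big)^n$.

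To attack this inequality I would pass to logarithms and linearise: by concavity of $\ln$ at the base point $2q$ one has $\ln f(\beta_n^{\,i-1}B_n)\le\ln 2q+\tfrac{\beta_n-2q}{q}\beta_n^{\,i-1}$, so summing the geometric series yields $\sum_{i=1}^n\ln f(\beta_n^{\,i-1}B_n)\le n\ln 2q+\tfrac{\beta_n-2q}{q}\cdot\tfrac{1-\beta_n^{\,n}}{1-\beta_n}$. Since $4pq<1$ forces $\ln 2q<\ln\tfrac1{2p}$, what remains is to show $\tfrac{\beta_n-2q}{q}\cdot\tfrac{1-\beta_n^{\,n}}{1-\beta_n}\le n\ln\tfrac1{4pq}$, and here the explicit form $\beta_n=\tfrac{1-1/n}{\sqrt[n-1]{16p^4}}$ (equivalently the calibration $2p\sqrt{\beta_n^{\,n-1}}<\tfrac1{2p}$) must be exploited to close the estimate. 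I expect this last numerical step to be the main obstacle: the target $\big(\tfrac1{2p}\big)^n$ is nearly tight at $x=B_n$, so one cannot afford to discard the factor $1-\beta_n^{\,n}$ in the geometric sum, and the inequality has to be read off carefully from the precise definition of $\beta_n$ rather than from the crude bound $\tfrac{1-\beta_n^{\,n}}{1-\beta_n}\le\tfrac1{1-\beta_n}$.
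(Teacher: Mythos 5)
Your reduction is sound up to the point you yourself flag as the ``main obstacle,'' but that obstacle is fatal: the inequality $\prod_{i=1}^{n}f(\beta_n^{\,i-1}B_n)\le\left(\tfrac{1}{2p}\right)^{n}$ to which you reduce the theorem is false for $p$ close to $\tfrac12$, so no refinement of the logarithmic estimate can close the argument. Indeed, $n(1-\beta_n)\to c:=1+4\ln(2p)$, so for $i-1\approx tn$ one has $\beta_n^{\,i-1}\to e^{-ct}$ and $f(\beta_n^{\,i-1}B_n)\to 2q+2(p-q)e^{-ct}$; hence
$$
\frac{1}{n}\ln\prod_{i=1}^{n}f(\beta_n^{\,i-1}B_n)\ \longrightarrow\ I(p):=\int_0^1\ln\bigl(2q+2(p-q)e^{-ct}\bigr)\,dt .
$$
At $p=\tfrac12$ one has $I=0$ and $I'(\tfrac12)=\int_0^1(4e^{-t}-2)\,dt=2-4e^{-1}>0$, whereas $\frac{d}{dp}\ln\tfrac{1}{2p}=-2$ there; thus for $p$ slightly above $\tfrac12$ your product grows exponentially in $n$ while $(\tfrac{1}{2p})^n$ decays, and already at $p=0.6$ a numerical evaluation gives $I\approx-0.014$ versus $\ln\tfrac{1}{2p}\approx-0.182$. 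A secondary issue: your passage to the endpoint uses $g_\lambda(x)\le B_n$ for $x\in[0,B_n]$, which is immediate for Lebesgue measure but not for a general continuous $\lambda$, since $\lambda([0,x))$ may exceed $x$.

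The divergence from the paper occurs exactly at the step you handled honestly. The paper replaces $f(g^{(i)}_\lambda(x))$ not by $f(\beta_n^{\,i-1}g_\lambda(x))$ but by $\beta_n^{\,i-1}f(g_\lambda(x))$, pulling the contraction factor outside $f$; this gives $f^{(n)}_\lambda(x)\le\beta_n^{\,n(n-1)/2}f(g_\lambda(x))^n\le\bigl(2p\sqrt{\beta_n^{\,n-1}}\bigr)^n\le(\tfrac{1}{2p})^n$ in two lines, using only $f\le 2p$ and the calibration of $\beta_n$ (and thereby also avoids any endpoint reduction). But $f(t)=2q+2(p-q)t$ is affine with $f(0)=2q>0$, so $f(\beta t)>\beta f(t)$ for $\beta<1$: your version, which keeps the additive constant $2q$, is the correct evaluation of what Corollary \ref{gi} actually yields, and it reveals that the geometric bound $g^{(i)}_\lambda(x)\le\beta_n^{\,i-1}g_\lambda(x)$ is by itself too weak to produce $(\tfrac{1}{2p})^n$. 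The real behaviour one would need to exploit is that the iterates contract ever faster ($G(x)\approx 2qx$ near $0$, and $2q<1$), so the factors $f(g^{(i)}_\lambda(x))$ drop toward $2q$ rather than staying near $2p$. In short, the gap is not a missing numerical lemma at the end of your argument; the estimate you (correctly) extract from the stated corollary does not suffice, and a genuinely sharper control of the iterates $g^{(i)}_\lambda$ is required.
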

\begin{proof}
We know that $f_\lambda^{(n)}(x)=\prod\limits_{i=1}^nf(g_\lambda^{(i)}(x)).$ Due to Corollary \ref{gi}, since $f$ is the increasing linear function, we then obtain for any $x\in[0,B_n]$ that
$$
f_\lambda^{(n)}(x)\leq \prod\limits_{i=1}^n\beta_n^{i-1}f(g_\lambda(x))=\left(\sqrt{\beta_n^{n-1}}f(g_\lambda(x))\right)^n.
$$
Since $2q\leq f(g_\lambda(x))\leq 2p,$ we have that $0\leq f_\lambda^{(n)}(x)\leq \left(2p\sqrt{\beta_n^{n-1}}\right)^n\leq \left(\frac{1}{2p}\right)^n$ for any $x\in[0,B_n]$. This completes the proof.
\end{proof}

Similarly, one can prove the following result.

\begin{thm}\label{p<0.5}
Let $p<\frac{1}{2}.$ There exists $0<A_n<1$ such that $\lim\limits_{n\to\infty}A_n=0$ and $0\leq f_\lambda^{(n)}(x)\leq (\frac{1}{2q})^n$ for any $x\in[A_n,1].$
\end{thm}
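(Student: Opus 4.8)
The plan is to reduce the case $p<\frac12$ to the already-proved case via the reflection symmetry $\sigma(x)=1-x$, which interchanges the roles of $p$ and $q$. Writing $\widetilde{G}(y)=y\bigl(y+2p(1-y)\bigr)$ and $\widetilde{f}(y)=2qy+2p(1-y)$ for the data of the Lebesgue qso with $p$ and $q$ swapped, a direct computation gives the conjugacy relations
\begin{equation*}
1-G(1-y)=\widetilde{G}(y),\qquad f(1-y)=\widetilde{f}(y).
\end{equation*}
Since $p+q=1$, the hypothesis $p<\frac12$ is the same as $q>\frac12$, so $\widetilde{f}$ is an \emph{increasing} linear function and $\widetilde{G}$ is exactly the type of map to which the Proposition preceding Theorem \ref{p>0.5} and Corollary \ref{gi} apply, now with the parameter $q$ in the distinguished role.

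First I would introduce the reflected iterates $h^{(i)}_\lambda(x):=1-g^{(i)}_\lambda(x)$. The recursion $1-g^{(n)}_\lambda=(1-g^{(n-1)}_\lambda)\bigl(1-g^{(n-1)}_\lambda+2pg^{(n-1)}_\lambda\bigr)$ already recorded in the text is precisely $h^{(i)}_\lambda=\widetilde{G}(h^{(i-1)}_\lambda)$, so the functions $h^{(i)}_\lambda$ are the iterates built from $\widetilde{G}$ and are governed by $q>\frac12$. I would then transcribe the construction preceding Theorem \ref{p>0.5} with $q$ in place of $p$: put $\gamma_n=\frac{1-\frac1n}{\sqrt[n-1]{16q^4}}$, so that $0<\gamma_n<1$, $\gamma_n\to1$, $\gamma_n>2p$ for large $n$, and $2q\sqrt{\gamma_n^{\,n-1}}<\frac{1}{2q}$; set $B_n'=\frac{\gamma_n-2p}{1-2p}$ and $A_n:=1-B_n'=\frac{1-\gamma_n}{1-2p}$. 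The reflected form of the Proposition then reads $\widetilde{G}(y)\le\gamma_n y$ on $[0,B_n']$ with $\widetilde{G}[0,B_n']\subset[0,B_n']$, and the reflected Corollary gives $0\le h^{(i)}_\lambda(x)\le\gamma_n^{\,i-1}h^{(1)}_\lambda(x)$ for $x\in[A_n,1]$ (the interval on which $h^{(1)}_\lambda(x)\le B_n'$), for $i=\overline{2,n}$.

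Finally, using $f(g^{(i)}_\lambda(x))=\widetilde{f}(h^{(i)}_\lambda(x))$ together with the product formula \eqref{fn}, I rewrite $f_\lambda^{(n)}(x)=\prod_{i=1}^n\widetilde{f}(h^{(i)}_\lambda(x))$; since $\widetilde{f}$ is increasing and $2p\le\widetilde{f}\le 2q$, the identical estimate as in Theorem \ref{p>0.5} yields $0\le f_\lambda^{(n)}(x)\le\bigl(2q\sqrt{\gamma_n^{\,n-1}}\bigr)^n\le\bigl(\tfrac{1}{2q}\bigr)^n$ on $[A_n,1]$. Because $\gamma_n\to1$ and $\gamma_n>2p$ eventually, we get $0<A_n<1$ and $A_n\to0$, which is exactly the claim. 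The one point requiring genuine care is the direction of monotonicity: for $p<\frac12$ the function $f$ itself is \emph{decreasing}, so the estimate of Theorem \ref{p>0.5} cannot be applied to the $g^{(i)}_\lambda$ directly; the role of the reflection is precisely to replace $f$ by the increasing function $\widetilde{f}$ and the shrinking-toward-$0$ dynamics near $x=0$ by the shrinking-toward-$0$ dynamics of the $h^{(i)}_\lambda$, which is why working with $h^{(i)}_\lambda=1-g^{(i)}_\lambda$ rather than $g^{(i)}_\lambda$ is the correct move.
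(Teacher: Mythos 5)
Your proposal is correct and is essentially the paper's own argument: the paper offers no written proof beyond ``Similarly, one can prove the following result,'' and what you have done is carry out that symmetric argument explicitly, via the reflection $x\mapsto 1-x$ conjugating $G$ to $\widetilde{G}$ and $f$ to $\widetilde{f}$ so that the Proposition, Corollary \ref{gi}, and the estimate of Theorem \ref{p>0.5} apply verbatim with $p$ and $q$ interchanged. Your identities $1-G(1-y)=\widetilde{G}(y)$, $f(1-y)=\widetilde{f}(y)$ and the choices $\gamma_n$, $B_n'$, $A_n=1-B_n'$ check out, so the proof is exactly as sound as the paper's proof of the $p>\frac{1}{2}$ case on which it leans.
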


Hence, the sequence of functions $f_\lambda^{(n)}(x)$ has a tall spike at the end points of the segment $[0,1]$ whenever $p,q\neq\frac{1}{2}.$ Consequently, the (weak) limit of this sequence of the functions $f_\lambda^{(n)}(x)$ is the Dirac delta function concentrated at the end points of the segment $[0,1]$.

\begin{thm}
Let $V$ be a qso generated by family of functions (2.1)-(2.3). Then for any initial continuous measure $\lambda\in S(X, \mathbb{B})$ there exists a strong limit of the sequence of measures $\{V^k\lambda\}$ where
\begin{displaymath}
\lim_{n\rightarrow \infty}V^n\lambda=\left\{
\begin{array}{rl}
\delta_0 & \mbox{ if } p<\frac{1}{2} \\
\delta_1 & \mbox{ if } p>\frac{1}{2}.
\end{array}\right.
\end{displaymath}
\end{thm}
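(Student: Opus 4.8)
The plan is to read off the concentration of the iterates $V^n\lambda$ at the endpoints of $[0,1]$ directly from the density estimates of Theorems \ref{p>0.5} and \ref{p<0.5}, and then to identify the limiting point mass. I treat the case $p>\frac12$ in detail; the case $p<\frac12$ is entirely symmetric, with Theorem \ref{p<0.5} and the sequence $A_n\to0$ playing the role of Theorem \ref{p>0.5} and $B_n\to1$, and with the right endpoint $1$ replaced by the left endpoint $0$.

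First I would fix an arbitrary $c\in(0,1)$ and estimate the mass $V^n\lambda([0,c])$. Since $\lim_{n\to\infty}B_n=1$ and $c<1$, there is an index $N$ such that $c\le B_n$, hence $[0,c]\subset[0,B_n]$, for every $n\ge N$. For such $n$ Theorem \ref{p>0.5} bounds the density by $(\tfrac{1}{2p})^n$ on $[0,c]$, so the representation \eqref{Vn} gives
$$
V^n\lambda([0,c])=\int_{[0,c]}f_\lambda^{(n)}(x)\,d\lambda(x)\le\Big(\tfrac{1}{2p}\Big)^n\lambda([0,c])\le\Big(\tfrac{1}{2p}\Big)^n.
$$
As $2p>1$, the right-hand side tends to $0$; therefore $V^n\lambda([0,c])\to0$ and, passing to complements, $V^n\lambda((c,1])\to1$ for every $c<1$.

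It remains to conclude that $V^n\lambda\to\delta_1$. For a bounded continuous $h$ I would split $\big|\int h\,d(V^n\lambda)-h(1)\big|$ over $[0,c]$ and $(c,1]$: the contribution from $(c,1]$ is at most $\sup_{x\in(c,1]}|h(x)-h(1)|$, which is small by continuity of $h$ at $1$, while the contribution from $[0,c]$ is at most $2\|h\|_\infty\,V^n\lambda([0,c])\to0$. Letting $n\to\infty$ and then $c\uparrow1$ yields $\int h\,d(V^n\lambda)\to h(1)$, i.e. $V^n\lambda\to\delta_1$. Equivalently, the inequality $\liminf_n V^n\lambda((c,1])\ge1$ for all $c<1$ is exactly the open-set criterion of the Portmanteau theorem for weak convergence to the point mass $\delta_1$.

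Two points deserve care. The interval $[0,B_n]$ on which the density bound of Theorem \ref{p>0.5} is valid varies with $n$, so the estimate cannot be applied on one fixed interval for all $n$ at once; this is circumvented above by using $B_n\to1$ to swallow any prescribed $c<1$ for all large $n$. The more essential point, and the genuine obstacle, concerns the mode of convergence: each $V^n\lambda$ is absolutely continuous with respect to $\lambda$ whereas $\delta_1$ is singular, so $\|V^n\lambda-\delta_1\|$ stays equal to $1$ and no limit in total variation can exist. The convergence proved here is therefore the weak (weak-$*$) convergence already anticipated in the remark preceding the theorem, and it is in this sense that the spike of $f_\lambda^{(n)}$ at the endpoint degenerates into the Dirac measure.
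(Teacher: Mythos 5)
Your argument is correct and is essentially the rigorous version of what the paper only sketches: the paper states this theorem with no proof at all, merely asserting after Theorems \ref{p>0.5} and \ref{p<0.5} that the ``tall spike'' of $f_\lambda^{(n)}$ at an endpoint forces the (weak) limit to be a Dirac mass there. Your writeup supplies exactly the missing steps --- fixing $c<1$, using $B_n\to 1$ to get $[0,c]\subset[0,B_n]$ for large $n$, integrating the bound $f_\lambda^{(n)}\le(1/2p)^n$ against \eqref{Vn} to get $V^n\lambda([0,c])\to 0$, and invoking the Portmanteau criterion --- so it is the same route, just actually carried out. Your closing observation is also a genuine and worthwhile correction to the statement itself: since each $V^n\lambda$ is absolutely continuous with respect to the non-atomic measure $\lambda$, one has $V^n\lambda(\{1\})=0$ for all $n$ while $\delta_1(\{1\})=1$, so neither total-variation nor even setwise convergence to $\delta_1$ can hold; the ``strong limit'' in the theorem (and the consequent ``regularity'' in the corollary) can only be read as weak convergence, which is what the paper's own preceding remark in fact anticipates. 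The only caveat is that your proof inherits whatever gaps exist in Theorems \ref{p>0.5} and \ref{p<0.5} themselves (e.g.\ the unaddressed requirement that $g_\lambda(x)$ land in $[0,B_n]$ in Corollary \ref{gi}), but taking those results as given, your deduction of the theorem from them is sound.
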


Recall that if $p=q=\frac{1}{2},$ then the corresponding qso is the identity transformation.

\begin{cor}
The Lebesgue quadratic stochastic operator $V$ generated by family of functions (2.1)-(2.3) is a regular transformation.
\end{cor}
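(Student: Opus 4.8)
The plan is to reduce the entire problem to the scalar dynamics of the cumulative distribution function, since this already captures every initial measure at once—discrete, continuous, or a mixture—so that no separate treatment of mixed measures is required. First I would record the single identity underlying Section 4: for an arbitrary $\lambda\in S(X,\mathbb{B})$, writing $g_\lambda(x)=\lambda([0,x))$, one has $g_{V\lambda}(x)=G(g_\lambda(x))$ with $G(x)=x(x+2q(1-x))$, and hence $g_{V^n\lambda}(x)=G^{(n)}(g_\lambda(x))$ for every $n$. This holds for \emph{any} $\lambda$, because the formula for $V\lambda$ on intervals was derived with no assumption on $\lambda$; in particular the scalar recursion $g^{(n)}_\lambda=G(g^{(n-1)}_\lambda)$ simultaneously encodes the finite-atom recursions of Subsection 3.1 and the continuous case.

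Next I would analyze the one-dimensional map $G\colon[0,1]\to[0,1]$. It is increasing with the only fixed points $0$ and $1$, and $G'(0)=2q$, $G'(1)=2p$. For $p>\tfrac12$ the point $0$ is attracting and $1$ is repelling, so $G^{(n)}(t)\to 0$ for every $t\in[0,1)$ while $G^{(n)}(1)=1$; for $p<\tfrac12$ the roles are reversed. Consequently $g_{V^n\lambda}(x)=G^{(n)}(g_\lambda(x))$ converges pointwise to a nondecreasing $\{0,1\}$-valued step function: when $p>\tfrac12$ it equals $1$ exactly where $g_\lambda(x)=1$, i.e. it jumps at $s=\sup(\operatorname{supp}\lambda)$, and when $p<\tfrac12$ it equals $0$ exactly where $g_\lambda(x)=0$, i.e. it jumps at $s=\inf(\operatorname{supp}\lambda)$. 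In either case the limiting object is the distribution function of a single Dirac measure $\delta_s$, a single extremal point of $S(X,\mathbb{B})$, irrespective of the arithmetic of $p$ versus $q$.

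From the pointwise convergence of the distribution functions to that of $\delta_s$ I would deduce weak convergence $V^n\lambda\to\delta_s$. To promote this to the strong limit demanded by the definition of regularity, I would invoke the spike estimates of Theorems \ref{p>0.5} and \ref{p<0.5}: for $p>\tfrac12$ the densities obey $0\le f_\lambda^{(n)}(x)\le(1/2p)^n$ on $[0,B_n]$ with $B_n\to 1$, and symmetrically $0\le f_\lambda^{(n)}(x)\le(1/2q)^n$ on $[A_n,1]$ with $A_n\to 0$ for $p<\tfrac12$. Thus outside an arbitrarily small neighbourhood of the attracting endpoint the mass of $V^n\lambda$ tends to $0$, while the remaining mass accumulates at that endpoint; together with the atomic computation of Subsection 3.1 this forces $V^n\lambda$ to concentrate, giving the strong limit $\delta_0$ or $\delta_1$ of the preceding theorem. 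Since the limit then exists for every $\lambda$—and $V$ is the identity, hence trivially regular, when $p=q=\tfrac12$—the operator $V$ is regular.

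The main obstacle I anticipate is exactly this last step: passing from pointwise convergence of the distribution functions (equivalently weak convergence) to the \emph{strong} limit in the sense of the definition, uniformly over all initial measures, including singular and mixed ones. The scalar identity $g_{V^n\lambda}=G^{(n)}\circ g_\lambda$ settles existence of the weak limit cleanly and uniformly, but matching the mode of convergence to the definition of regularity—and handling with care the half-open interval $[0,1)$, on whose boundary the putative limit $\delta_1$ sits—is where the genuine work lies; the spike bounds of Theorems \ref{p>0.5} and \ref{p<0.5} are the tool I would lean on to close that gap.
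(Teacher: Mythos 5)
Your reduction to the scalar dynamics of the distribution function is a genuinely different route from the paper's. The paper argues case by case: finitely many atoms in Subsection 3.1, then continuous measures with a differentiable $g_\lambda$ in Subsection 3.2 and Section 4 via the Radon--Nikodym densities $f^{(n)}_\lambda$, and the corollary is simply read off from the final theorem of Section 4 together with the discrete computation and the remark that $p=q=\frac{1}{2}$ gives the identity. Your identity $g_{V^n\lambda}=G^{(n)}\circ g_\lambda$ is correct for \emph{every} $\lambda$ (one checks directly that $V\lambda([0,b])$ depends on $\lambda$ only through $\lambda([0,b])$, atoms and diagonal included), and since the increasing map $G$ has no fixed points in $(0,1)$, the pointwise limit of the distribution functions is the distribution function of a Dirac measure at an endpoint of $\operatorname{supp}\lambda$. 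This buys uniformity over all initial measures --- discrete, continuous, singular or mixed --- where the paper covers only the two extreme cases and additionally assumes $g_\lambda$ differentiable. Carried out carefully, your approach also exposes an internal inconsistency of the paper: one and the same recursion cannot send discrete measures to the \emph{leftmost} atom and continuous measures to the \emph{right} endpoint when $p>\frac{1}{2}$, as Subsections 3.1 and 3.2 jointly assert; the two sections tacitly use opposite conventions for which of $\{x\},\{y\}$ receives the weight $p$. For the existence of a limit, and hence for the corollary, this does not matter.

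The gap you flag in your last step is real and, as stated, cannot be closed. For a non-atomic $\lambda$ the paper itself shows $V^n\lambda\ll\lambda$ for every $n$, hence $V^n\lambda(\{s\})=0$ for all $n$ while $\delta_s(\{s\})=1$; the total variation (or setwise) distance between $V^n\lambda$ and any Dirac measure therefore never drops below $1$, and no strong limit equal to $\delta_0$ or $\delta_1$ can exist. The spike estimates of Theorems \ref{p>0.5} and \ref{p<0.5} bound the densities on $[0,B_n]$, resp.\ $[A_n,1]$, and deliver exactly what your distribution-function computation already gives, namely $V^n\lambda([0,B_n])\to 0$, i.e.\ weak-$*$ concentration at the endpoint; they are not a tool for upgrading weak convergence to strong convergence. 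What your argument (and the paper's) actually establishes is \emph{weak} regularity in the sense of the definition in the introduction; the strong statement holds only for initial measures charging the limiting endpoint of their support, e.g.\ the purely atomic measures of Subsection 3.1, where $\|V^n\lambda-\delta_{a_1}\|\to 0$ does follow. The paper's final theorem of Section 4 asserts the strong limit without proof and suffers from the same defect, so your instinct that the genuine work lies in that passage was right --- but the correct resolution is to weaken the conclusion to weak convergence, not to find a stronger argument.
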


\section*{Acknowledgement}
The work has been supported by the MOHE grant FRGS14-116-0357. The third author (R.M.) wishes to thank International Islamic University Malaysia (IIUM), where this paper was written, for the invitation and hospitality.

\end{document}